\documentclass[article]{amsart}
\usepackage{cases}
\usepackage{amsmath, amsfonts, pifont, amssymb}
\usepackage{verbatim}
\usepackage[bookmarks=true]{hyperref}
\usepackage{geometry}
 \geometry{
 a4paper,
 left=40mm,right=40mm,
 top=35mm,
 }
\newtheorem{theorem}{Theorem}[section]
\newtheorem{lemma}[theorem]{Lemma}
\newtheorem{proposition}[theorem]{Proposition}

\newcommand{\beq}{\begin{equation}}
\newcommand{\eeq}{\end{equation}}
\newcommand{\beqq}{\begin{equation*}}
\newcommand{\eeqq}{\end{equation*}}

\theoremstyle{definition}

\theoremstyle{remark}
\newtheorem{remark}[theorem]{Remark}

\numberwithin{equation}{section}




\numberwithin{equation}{section}

\begin{document}

\address{Chenjie Fan
\newline \indent Academy of Mathematics and Systems Science, CAS, China\indent }
\email{fancj@amss.ac.cn}

\address{Zehua Zhao
\newline \indent Department of Mathematics and Statistics, Beijing Institute of Technology,
\newline \indent MIIT Key Laboratory of Mathematical Theory and Computation in Information Security,
\newline \indent  Beijing, China. \indent}
\email{zzh@bit.edu.cn}

\title[]{A note on decay property of nonlinear Schr\"odinger equations}
\author{Chenjie Fan and Zehua Zhao}
\maketitle

\setcounter{tocdepth}{1}
\tableofcontents

\begin{abstract}
In this note, we show the existence of a special solution $u$ to defocusing cubic  NLS in $3d$, which lives in $H^{s}$ for all $s>0$, but scatters to a linear solution in a very slow way. We prove for this $u$, for all $\epsilon>0$, one has $\sup_{t>0}t^{\epsilon}\|u(t)-e^{it\Delta}u^{+}\|_{\dot{H}^{1/2}}=\infty$. Note that such a slow asymptotic convergence is impossible if one further pose the initial data of $u(0)$ be in $L^{1}$. We expect that similar construction hold the for other NLS models. It can been seen the slow convergence is caused by the fact that there are delayed backward scattering profile in the initial data, we also illustrate why $L^{1}$ condition of initial data will get rid of this phenomena.
\end{abstract}
\bigskip

\noindent \textbf{Keywords}: Nonlinear Schr\"odinger equation, decay estimate, scattering, scattering rate, convergence rate
\bigskip

\noindent \textbf{Mathematics Subject Classification (2020)} Primary: 35Q55; Secondary: 35R01, 37K06, 37L50.

\section{Introduction}
\subsection{Statement of main results}
Defocusing NLS (nonlinear Schr\"odinger equations) with power-type nonlinearity in $d$-dimensional Euclidean space reads as follows
\begin{equation}\label{maingeneral}
    \left(i\partial_t+ \Delta_{\mathbb{R}^{d}} \right) u=   |u|^{p} u \quad u(0,x)=u_0(x).
\end{equation}
There has been extensive research activities on those models in the area of dispersive PDEs. If one considers proper choice of $(p,d)$ and, the scattering behavior could be obtained for many models (see for example \cite{bourgain1999global,colliander2008global,Dodson3,kenig2006global,tao2006nonlinear} and reference therein), i.e. the solution $u$ is global and for some $u_\pm\in \dot{H}^{s}$,
\begin{equation}
\|u(t)-e^{it\Delta}u^{\pm}\|_{\dot{H}_{x}^{s}}\rightarrow 0, \text{ as } t\rightarrow \pm \infty,
\end{equation}
This indicates that the dynamics of the nonlinear solution $u$ resembles (converges to) linear solution in the long time. A natural question is: can one describe the scattering behavior in a more \textbf{quantitative} way? For example, to describe the convergence rate of the nonlinear solution, that is, finding a continuous, decreasing function of time $f(t)$ such that, for $t>0$
\begin{equation}
  \|u(t)-e^{it\Delta}u^{+}\|_{\dot{H}_{x}^{s}}  \lesssim_{data} f(t).
\end{equation}
(Here the notation $\lesssim_{data}$ means the implicitly constant in the inequality depends on the initial data.) Another aspect is considering scattering rate, i.e., the decay of the scattering norm, for $s>0$,
\begin{equation}
  \|u(t)\|_{L_{t,x}^{\frac{2(d+2)}{d-2s_c}}(t\geq s)}    \lesssim_{data} f(s).
\end{equation}
Here the spacetime norm $L_{t,x}^{\frac{2(d+2)}{d-2s_c}}$ is the scattering norm for NLS \eqref{maingeneral}, which is an important norm for NLS models. For example, the finiteness of scattering implies scattering behavior (see \cite{tao2006nonlinear} for more explanations).
 
 The above two estimates describes how fast the nonlinear solution of NLS scatters to linear solutions asymtotically. We conjecture that, for NLS initial value problem \eqref{maingeneral}, assuming the initial data $u_0$ in $H^s \cap L^1$ (for $s>0$ large enough),
\begin{equation}\label{eq: convergencerate}
  \|u(t)-e^{it\Delta}u^{+}\|_{\dot{H}_{x}^{s}}  \lesssim_{data} t^{-\alpha_1},
\end{equation}
and
\begin{equation}\label{eq: scatteringrate}
  \|u(t)\|_{L_{t,x}^{\frac{2(d+2)}{d-2s_c}}(t\geq s)}    \lesssim_{data} t^{-\alpha_2},
\end{equation}
for some $\alpha_1,\alpha_2>0$ which depends on the specific model.

In this note, we focus on a typical model of this type, i.e. 3d  cubic NLS,
\begin{equation}\label{maineq}
    \left(i\partial_t+ \Delta_{\mathbb{R}^{3}} \right) u=   |u|^{2} u \quad u(0,x)=u_0(x).
\end{equation}
See \cite{cazenave2003semilinear,kenig2010scattering,tao2006nonlinear} for more background regarding this model. We will prove the \textbf{necessity} of $L^1$ assumption for initial data $u_{0}$, for \eqref{eq: scatteringrate} and \eqref{eq: convergencerate}. 

The main result of this note reads
\begin{theorem}\label{mainthm}
Consider initial value problem \eqref{maineq}. There exists $u_0 \in H^{100}$ such that, for any $\epsilon>0$, 
\begin{equation}\label{eq1}
    \sup_{t>0} t^{\epsilon}\|u(s,x)\|_{L^{5}_{s,x}(t,+\infty)}=\infty.
\end{equation}
and
\begin{equation}\label{eq2}
    \sup_{t>0} t^{\epsilon}\|u(t)-e^{it\Delta}u^{+}\|_{\dot{H}_{x}^{\frac{1}{2}}}=\infty,
\end{equation}
where $u^{+}$ is the final state where the solution $u$ scatters to when $t \rightarrow \infty$.
\end{theorem}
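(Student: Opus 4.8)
The plan is to let $u_0$ be a superposition of linear wave packets launched so that they disperse through the spatial origin at a rapidly increasing sequence of times $t_n\to\infty$; the nonlinear flow then keeps producing scattering activity arbitrarily far in the future, and this is exactly what defeats any polynomial rate. I would fix a Schwartz profile $\phi$ and set $u_0=\sum_{n\ge1}c_n e^{-it_n\Delta}\phi$ with $c_n=\delta\,2^{-2^n}$, $\delta$ small, and $t_n$ a fast-growing sequence to be fixed at the end. Since $e^{-it_n\Delta}$ is unitary on each $H^s$ and $\sum_n c_n<\infty$, we get $u_0\in H^{100}$ and $\norm{u_0}_{\dot{H}^{1/2}}\lesssim\delta$, so the small-data theory applies: $u$ is global, scatters to some $u^+$, $\norm{u}_{L^5_{t,x}(\R)}\lesssim\delta$, and $u(t)=e^{it\Delta}u_0-i\int_0^t e^{i(t-s)\Delta}(\abs{u}^2u)\,ds$. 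The governing heuristic is that $e^{it\Delta}u_0=\sum_n c_n e^{i(t-t_n)\Delta}\phi$ is, near $t=t_n$, a single packet $c_n e^{i(t-t_n)\Delta}\phi$ carrying scattering mass $\approx c_n\norm{e^{i\sigma\Delta}\phi}_{L^5_{\sigma,x}}$, while every other packet either has negligible amplitude or has long since dispersed.

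For \eqref{eq1} I would lower bound the scattering tail on $I_N:=(t_N/2,\infty)$. The future behaviour on $I_N$ is driven by the linear evolution of $u(t_N/2)$, namely $e^{it\Delta}u_0+e^{it\Delta}g$ with $g=-i\int_0^{t_N/2}e^{-is\Delta}(\abs{u}^2u)\,ds$ the accumulated past interaction. The $N$-th packet contributes $\gtrsim c_N$ to $\norm{e^{it\Delta}u_0}_{L^5_{t,x}(I_N)}$; the later packets contribute $\lesssim\sum_{m>N}c_m\approx\delta^{-1}c_N^2\ll c_N$ by super-exponential decay of $c_n$; and the earlier packets together with $g$ have dispersed before $t_N/2$, so they contribute only dispersive tails $\lesssim N\,t_N^{-7/10}$ (using $\norm{e^{i\sigma\Delta}\phi}_{L^5_x}\lesssim\abs{\sigma}^{-9/10}$), which is $\ll c_N$ once $t_N$ is large. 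Hence $\norm{e^{it\Delta}(u_0+g)}_{L^5_{t,x}(I_N)}\gtrsim c_N$, and small-data stability on $I_N$ upgrades this to $\norm{u}_{L^5_{t,x}(I_N)}\gtrsim c_N$ after absorbing the $O(c_N^3)$ nonlinear error. Taking $t_n=2^{2^{2n}}$ then gives $(t_N/2)^\epsilon\norm{u}_{L^5_{t,x}(I_N)}\gtrsim t_N^\epsilon c_N=\delta\,2^{\epsilon 2^{2N}-2^N}\to\infty$ for every $\epsilon>0$, which is \eqref{eq1}.

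For \eqref{eq2} I would start from the wave-operator identity $u(t)-e^{it\Delta}u^+=i\int_t^\infty e^{i(t-s)\Delta}(\abs{u}^2u)(s)\,ds$, so by unitarity $\norm{u(t)-e^{it\Delta}u^+}_{\dot{H}^{1/2}}=\norm{\int_t^\infty e^{-is\Delta}(\abs{u}^2u)\,ds}_{\dot{H}^{1/2}}$. At $t=t_N/2$ the integrand is concentrated near $s=t_N$, where $u$ equals the single packet $c_N e^{i(s-t_N)\Delta}\phi$ up to relative error $\ll c_N$; by cubic homogeneity and the substitution $\sigma=s-t_N$, the main term is $c_N^3 e^{-it_N\Delta}\int_{-t_N/2}^\infty e^{-i\sigma\Delta}\big(\abs{e^{i\sigma\Delta}\phi}^2 e^{i\sigma\Delta}\phi\big)d\sigma$, whose $\dot{H}^{1/2}$-norm converges to $c_N^3 K_\phi$ with $K_\phi:=\norm{\int_\R e^{-i\sigma\Delta}(\abs{e^{i\sigma\Delta}\phi}^2 e^{i\sigma\Delta}\phi)\,d\sigma}_{\dot{H}^{1/2}}$. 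Choosing $\phi$ so that $K_\phi\ne0$, and checking that the remaining packets contribute $\lesssim c_N^6$, I obtain $\norm{u(t_N/2)-e^{i(t_N/2)\Delta}u^+}_{\dot{H}^{1/2}}\gtrsim c_N^3$, whence $(t_N/2)^\epsilon c_N^3=\delta^3 2^{\epsilon 2^{2N}-3\cdot2^N}\to\infty$ for every $\epsilon>0$, giving \eqref{eq2}.

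The crux is the two lower bounds, each requiring one to isolate quantitatively the single relevant packet. The delicate point common to both is that, when localizing to $(t_N/2,\infty)$, one must show that the nonlinear interaction accumulated before $t_N/2$ — the term $g$, of total size $O(\delta^3)\gg c_N$ in $\dot{H}^{1/2}$ — nevertheless disperses and becomes negligible in the future, contributing $\ll c_N$ to the scattering norm on $(t_N/2,\infty)$. This is exactly what forces the two incompatible-looking scales, super-exponentially small amplitudes $c_n$ against super-exponentially large gaps $t_n$, and they are reconciled because the obstruction terms are positive powers of $c_n$ or negative powers of $t_n$ that can be controlled independently. The least automatic step is the nonvanishing $K_\phi\ne0$ in \eqref{eq2}: since $u(t)-e^{it\Delta}u^+$ is a purely cubic quantity, I must both extract its $c_N^3$ leading term and rule out cancellation, which amounts to choosing the base profile $\phi$ so that its forward/backward nonlinear scattering defect is nonzero, a generic but genuine non-degeneracy condition on $\phi$.
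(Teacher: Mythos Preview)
Your construction captures the same mechanism as the paper --- delayed backward-scattering profiles --- but with different parameters: the paper takes $c_n=1/n^2$, $t_n=10^n$ (so the lower bound $\sim N^{-6}$ with $N\sim\log t$ beats any $t^{-\epsilon}$), while you take super-exponentially small $c_n$ against doubly-exponential $t_n$ and work entirely in the small-data regime. This buys you freedom from the large-data scattering theorem the paper invokes (their Proposition~1.6), and you prove \eqref{eq1} and \eqref{eq2} separately, whereas the paper proves \eqref{eq2} first and then deduces \eqref{eq1} from it via Strichartz.

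However, the step you yourself flag as delicate --- the dispersion of the accumulated history $g$ on $I_N$ --- is not actually established by what you wrote. You assert that $g$ contributes $\lesssim N\,t_N^{-7/10}$ on $I_N$, but $\|g\|_{\dot H^{1/2}}\sim\delta^3$ is a \emph{fixed} quantity, neither a positive power of $c_N$ nor a priori a negative power of $t_N$; small-data Strichartz alone only gives $\|e^{it\Delta}g\|_{L^5_{t,x}(I_N)}\lesssim\delta^3\gg c_N$. To beat $c_N$ you need pointwise-in-time information on $|u|^2u$ (say an $L^{5/4}_x$ bound decaying in $|s-t_m|$) so that the dispersive factor $(t-s)^{-9/10}$ can act; iterating the Duhamel expansion in $L^5_{t,x}$ does not help, since the $k$-th remainder is $O(\delta^{2k+1})$, still independent of $N$. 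What is actually required is a decay statement of the form $\|u(s)\|_{L^p_x}\lesssim \sum_m c_m(1+|s-t_m|)^{-\alpha}$, proved by a bootstrap --- and that is precisely the content of the paper's Section~3 (their Lemma~3.1). In your small-data setting this bootstrap is easier, but it is not automatic and should be stated and proved. The same gap recurs in your argument for \eqref{eq2}: asserting ``$u$ equals the single packet up to relative error $\ll c_N$'' near $s=t_N$ again presupposes exactly this pointwise decay. Your non-degeneracy condition $K_\phi\ne0$ is the analogue of the paper's nonvanishing Duhamel integral on $[0,\tau_0]$; both are generic, and neither proof verifies it carefully.
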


The construction of $u_{0}$ is quite explicit. 

\begin{equation}\label{construct}
   u_0:=\sum_{n \geq 0}\frac{1}{n^2}e^{-i10^{n}\Delta}\phi.
\end{equation}
Here $\phi$ is a Schwarz function. We note that it is enough to consider $\phi \in H^{10}\cap L^1$ for example. We consider Schwarz function for convenience.
\begin{remark}
We note that such construction is inspired by concentration compactness method. See \cite{kenig2006global,kenig2008global,keraani2001defect} and the reference therein.
\end{remark}
We will fix $\phi$ and this $u_{0}$.

Clearly, since $u_0 \in H^1$, the corresponding solution $u$ scatters to some $e^{it\Delta} u^{+}$ as $t \rightarrow \infty$ (see \cite{colliander2004global,kenig2010scattering}).

\begin{remark}
By slightly modifying the proof of Theorem \ref{mainthm}, one can also prove that, for any given $g$ s.t $g(t)$ monotonically increases to $\infty$ as $t$ goes to infinity, one may construct $u$ lives in $H^{s}$, $s$ large, so that 
\begin{equation}
    \sup_{t>0} g(t)\|u(s,x)\|_{L^{5}_{s,x}(t,+\infty)}=\infty,
\end{equation}
and
\begin{equation}
    \sup_{t>0} g(t)\|u(t)-e^{it\Delta}u^{+}\|_{\dot{H}_{x}^{\frac{1}{2}}}=\infty.
\end{equation}

Basically, the sequence $\{10^{n}\}_{n}$ are quite separated (compared to $n^{2}$) as $n$ goes to infinity. And one may make it as separate as one want.For example, one replaces $10^{n}$ by $a_{n}$, and imposes further $\frac{1}{n^{100}}g(a_{n})$ goes to infinity.
\end{remark}

In the appendix, we also present  the proof of scattering rate and the convergence rate when the data is in $H^4 \cap L^1 (\mathbb{R}^3)$ space based on our recent work \cite{fan2021decay}.
  
We note that analogous results of this article can be generalized to other NLS models in a natural way.

Note that, since all $H^{1}$ initial data $u_{0}$ to \eqref{maineq} will give a global solution $u$, with $\|u\|_{L_{t,x}^{5}}\lesssim 1$. Thus, for all $\delta>0$, there exists a $L$, such that 
\begin{equation}\label{eq: preone}
\|u\|_{L^{5}_{t,x}([L,\infty)\times \mathbb{R}^3)}\leq \delta.	
\end{equation}
This $L$ does not depend on size of $\|u_{0}\|_{H^{1}}$, but also the profile (or shape) of $u_{0}$. And this cannot be improved, essentially due to time translation symmetry of the equation, \eqref{maineq}.

However, for $u_{0}\in H^{1}\cap L^{1}$, we have
\begin{theorem}\label{thm: onemore}
Let $u$ solves \eqref{maineq}, with $u_{0}\in H^{1}\cap L^{1}$. For all $\delta>0$, there exists $L>0$, depending only on $\|u_{0}\|_{H^{1}\cap L^{1}}$, so that 
\begin{equation}
	\|u\|_{L_{t,x}^{5}([L,\infty)\times \mathbb{R}^3)}\leq \delta.
\end{equation}
\end{theorem}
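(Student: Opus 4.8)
The plan is to isolate the one piece of genuinely profile-independent smallness, namely the free evolution, and then to argue that the $H^1$ global theory together with a compactness argument forces the remaining (nonlinear) part of the tail to become small after a time depending only on the norm.

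First I would record the two inputs. Since $u_0\in H^1$, the global well-posedness and scattering theory for \eqref{maineq} provides a finite global spacetime bound
\[ \|u\|_{L^5_{t,x}(\R\times\R^3)}+\|u\|_{\dot{S}^{1/2}(\R)}\le M(\|u_0\|_{H^1}), \]
where $\dot{S}^{1/2}$ denotes the full Strichartz norm at the $\dot H^{1/2}$ level; moreover, energy and mass conservation bound $\|u(t)\|_{H^1}$ uniformly in $t$ by a constant depending only on $\|u_0\|_{H^1}$. The $L^1$ hypothesis, by contrast, enters only through the linear dispersive estimate: interpolating $\|e^{it\Delta}u_0\|_{L^\infty_x}\lesssim t^{-3/2}\|u_0\|_{L^1}$ with mass conservation and integrating the fifth power gives
\[ \|e^{it\Delta}u_0\|_{L^5_{t,x}([L,\infty)\times\R^3)}\lesssim L^{-7/10}\|u_0\|_{L^1}^{3/5}\|u_0\|_{L^2}^{2/5}. \]
This is the only bound at our disposal whose smallness for large $L$ depends on $\|u_0\|_{H^1\cap L^1}$ rather than on the profile of $u_0$, and the whole task is to upgrade the (profile-dependent) statement $\|u\|_{L^5_{t,x}([L,\infty))}\to0$ to this kind of norm-only dependence.

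Next I would reduce the nonlinear tail to a perturbative problem. Writing Duhamel's formula on $[L,\infty)$ and splitting the source at time $L$,
\[ u(t)=e^{it\Delta}u_0-i\int_0^L e^{i(t-s)\Delta}(|u|^2u)(s)\,ds-i\int_L^t e^{i(t-s)\Delta}(|u|^2u)(s)\,ds, \]
the first term is controlled by the displayed linear estimate, and the last term is treated by the standard stability estimate, so that once $\|u\|_{L^5_{t,x}([L,\infty))}$ is small relative to $M(\|u_0\|_{H^1})$ it can be absorbed into the left-hand side via Strichartz. The obstruction is the middle term, which is the free evolution from time $L$ of the nonlinear history $u(L)-e^{iL\Delta}u_0=-i\int_0^L e^{i(L-s)\Delta}(|u|^2u)\,ds$: to make it small by dispersion I would want an $L^1$-type bound on this object, but $L^1$ is not preserved by the free flow, and at $H^1$ regularity the non-integrable-in-time singularity of the three-dimensional dispersive kernel blocks the naive decay bootstrap. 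Transferring the $L^1$/dispersive information from $t=0$ to the late times is the main obstacle.

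I would therefore resolve the uniformity by contradiction together with a profile decomposition. Suppose the statement fails: there are $\delta_0>0$, data $u_{0,n}$ with $\|u_{0,n}\|_{H^1\cap L^1}\le R$, and times $L_n\to\infty$ with $\|u_n\|_{L^5_{t,x}([L_n,\infty))}\ge\delta_0$. By the uniform $H^1$ bound the sequence $u_n(L_n)$ is bounded in $H^1$, so after the time translation $w_n(t):=u_n(t+L_n)$ I may apply the $\dot H^{1/2}$ profile decomposition to $w_n(0)=u_n(L_n)$ and run the nonlinear profile and stability machinery in the spirit of \cite{keraani2001defect,kenig2006global}. A persistent tail $\|w_n\|_{L^5_{t,x}([0,\infty))}\ge\delta_0$ forces at least one nonlinear profile radiating into $t\ge0$, that is, a piece of the data which, followed back to the original time origin, focuses only at a time comparable to $L_n\to\infty$. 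But the uniform linear dispersion supplied by the $L^1$ bound, $\|e^{it\Delta}u_{0,n}\|_{L^\infty_x}\lesssim R\,t^{-3/2}$, is incompatible with the free concentration of a fixed amount of $\dot H^{1/2}$ energy at times tending to infinity; this is the contradiction. The delicate points are the bookkeeping of the time parameters in the profile decomposition and verifying that the offending profile genuinely requires late-time free concentration, which is exactly where the $L^1$ hypothesis, and nothing about the profile beyond its norm, is used.
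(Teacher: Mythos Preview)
Your overall strategy---contradiction, profile decomposition, and the $L^1$ dispersive estimate as the source of uniformity---is exactly the paper's. The paper's execution is more direct than yours, though. Instead of applying the profile decomposition to the time-translated data $u_n(L_n)$ and then ``following back to the original time origin,'' the paper applies concentration compactness directly to the initial data sequence $u_{0,n}$: the persistent tail $\|u_n\|_{L^5_{t,x}([n,\infty))}\ge\delta$ forces, via the nonlinear profile machinery, a backward scattering profile---that is, some $f\in H^1$ and parameters $t_n\to\infty$, $x_n\in\R^3$ with $\langle e^{it_n\Delta}u_{0,n}(\cdot-x_n),f\rangle\to\|f\|_{L^2}^2>0$. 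One then approximates $f$ by a compactly supported function (so $f\in L^1$) and pairs it against $\|e^{it_n\Delta}u_{0,n}\|_{L^\infty_x}\lesssim R\,t_n^{-3/2}\to 0$ to obtain the contradiction in one line. Your route through $u_n(L_n)$ would eventually land in the same place, but it inserts an extra step---identifying a profile of the \emph{nonlinear} state $u_n(L_n)$ with late-time concentration of the \emph{linear} flow $e^{it\Delta}u_{0,n}$---which you flag as ``delicate'' and do not fully justify; working at the initial data avoids this entirely. Your preliminary Duhamel discussion correctly isolates why a direct perturbative argument stalls, but it is not needed for the proof.
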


One application of Theorem \ref{thm: onemore} is the following.
It has been proved in \cite{fan2021decay}, see also (\cite{guodecay}) that for initial data $u_{0}\in H^{4}\cap L^{1}$, the associated nonlinear solution $u$ to \eqref{maineq} satisfies
\begin{equation}\label{eq: od}
	\|u\|_{L_{x}^{\infty}}\lesssim C_{u_{0}}t^{-3/2}.
\end{equation}
 
 Note that this $C_{u_{0}}$ depending on the profile of $u_{0}$ is that we were arguing with \eqref{eq: preone} rather than Theorem \ref{thm: onemore}.
 
 Thus, with Theorem \ref{thm: onemore}, one enhance \eqref{eq: od} into
 \begin{equation}
 	\|u\|_{L_{x}^{\infty}}\lesssim C_{\|u_{0}\|_{H^{4}\cap L^{1}}}t^{-3/2}.
 \end{equation}
 
 It remains an interesting problem to further characterize $C_{\|u_{0}\|_{H^{4}\cap L^{1}}}$.

Now we summarize the global well-posedness and scattering result for \eqref{maineq}. As a corollary of low regularity results \cite{colliander2004global}, \cite{kenig2010scattering}, one has 
\begin{proposition}\label{global}
Initial value problem \eqref{maineq} is globally well-posed and scattering in $H^1$ space. More precisely, for any $u_0$ with finite energy, $u_{0}\in H^{1}$, there exists a
unique global solution $u\in C^0_t({H}^1_x)\cap L^{5}_{x,t}$ such that
\begin{equation}
    \int_{-\infty}^{\infty} \int_{\mathbb{R}^3}|u(t,x)|^{5}dxdt\leq C_{\|u_{0}\|_{H^{1}}},
\end{equation}
for some constant $C(\|u_{0}\|_{H^{1}})$ that depends only on $\|u_{0}\|_{H^{1}}$.
And if $u_0 \in H^s$ for some $s>1$, then $u(t)\in H^s$ for all time $t$, and one has the uniform bounds
\begin{equation}
    \sup\limits_{t\in \mathbb{R}} \|u(t)\|_{H^s} \leq C_{\|u_{0}\|_{H^{1}}}\|u_0\|_{H^s}.
\end{equation}
\end{proposition}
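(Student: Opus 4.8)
The plan is to treat Proposition \ref{global} as the standard global well-posedness and scattering theory for the defocusing cubic NLS in $3d$, which is mass-supercritical and energy-subcritical (the scaling-critical regularity is $s_c=\tfrac12$, strictly below the energy level). The statement follows from the cited low-regularity global results \cite{colliander2004global,kenig2010scattering}; below I indicate the self-contained route one would assemble, which proceeds by (i) local theory plus conservation laws for global existence, (ii) a global spacetime (Morawetz) bound for scattering, and (iii) persistence of regularity for the higher Sobolev control.

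First I would set up the local Cauchy theory in $H^1$ via the Strichartz estimates for $e^{it\Delta}$ and a contraction mapping on a suitable space (e.g.\ $C_t H^1_x \cap L^5_{t,x}$ together with its derivative counterpart), obtaining a unique local solution whose existence time depends only on $\|u_0\|_{H^1}$; this subcritical feature is what eventually forces the constants in the statement to depend on the $H^1$ norm alone. The conserved mass $M(u)=\|u\|_{L^2}^2$ and energy $E(u)=\tfrac12\|\nabla u\|_{L^2}^2+\tfrac14\|u\|_{L^4}^4$ are then exploited: in the defocusing case both terms of $E$ are nonnegative, so energy conservation bounds $\|\nabla u(t)\|_{L^2}$ uniformly in time and mass conservation bounds $\|u(t)\|_{L^2}$, giving a uniform-in-time $H^1$ bound. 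Iterating the local theory on time steps of fixed length then yields a global solution in $C^0_t H^1_x$.

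The key quantitative step is the global scattering bound. Here I would invoke the interaction Morawetz estimate of \cite{colliander2004global}, which in $3d$ controls $\|u\|_{L^4_{t,x}(\mathbb{R}\times\mathbb{R}^3)}$ by conserved quantities (schematically $\|u\|_{L^4_{t,x}}^4 \lesssim \|u_0\|_{L^2}^2\,\sup_t\|u(t)\|_{\dot H^{1/2}}^2$, and the right-hand side is finite by interpolating the mass and energy bounds). Combining this a priori bound with Strichartz estimates and a standard continuity/bootstrap argument upgrades it to the global scattering-norm bound $\|u\|_{L^5_{t,x}(\mathbb{R}\times\mathbb{R}^3)}\le C_{\|u_0\|_{H^1}}$: one subdivides $\mathbb{R}$ into finitely many intervals, whose number is controlled by the Morawetz bound and hence by $\|u_0\|_{H^1}$, on each of which the relevant Strichartz norm is small, runs the perturbative local estimate, and sums. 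Finiteness of the $L^5_{t,x}$ norm then yields scattering to final states $u^\pm\in H^1$ through the usual Duhamel representation, by showing $e^{-it\Delta}u(t)$ is Cauchy in $H^1$ as $t\to\pm\infty$.

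Finally, for the persistence of regularity when $u_0\in H^s$ with $s>1$, I would use the global $L^5_{t,x}$ bound to partition the time axis into $N=N(\|u_0\|_{H^1})$ intervals on each of which $\|u\|_{L^5_{t,x}}\le\eta$ for a small absolute $\eta$. On each such interval the Strichartz/energy estimate for the equation satisfied by $\langle\nabla\rangle^s u$ closes with a fixed multiplicative constant, so $\|u\|_{L^\infty_t H^s}$ on that interval is bounded by a fixed multiple of its value at the left endpoint; multiplying these bounds across the $N$ intervals produces $\sup_{t}\|u(t)\|_{H^s}\le C_{\|u_0\|_{H^1}}\|u_0\|_{H^s}$, with linear dependence on $\|u_0\|_{H^s}$. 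The main obstacle in a fully self-contained treatment is the interaction Morawetz estimate together with the bootstrap that converts the a priori $L^4_{t,x}$ control into the scattering norm with a constant depending only on $\|u_0\|_{H^1}$; everything else is routine Strichartz theory. Since these are precisely the contents of \cite{colliander2004global,kenig2010scattering}, the proposition follows as a corollary.
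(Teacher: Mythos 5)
Your proposal is correct and matches the paper's approach: the paper states this proposition without proof, deriving it as a corollary of the cited low-regularity results \cite{colliander2004global,kenig2010scattering}, and the route you sketch (local $H^1$ theory plus conservation laws, the interaction Morawetz estimate upgraded via Strichartz bootstrap to the global $L^5_{t,x}$ bound with constant depending only on $\|u_0\|_{H^1}$, and interval-splitting persistence of regularity) is exactly the standard argument behind those references. No gaps worth flagging.
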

Thus, by the above proposition, one may assume 
\begin{equation}\label{eq: boundglobal}
   ||u(t)||_{L^{\infty}_tH_x^4} \leq M_1,
\end{equation}
where $u$ solves \eqref{maineq} with smooth enough data (at least $H^4$).
\subsection{Notations}
Throughout this note, we use $C$ to denote  the universal constant and $C$ may change line by line. We say $A\lesssim B$, if $A\leq CB$. We say $A\sim B$ if $A\lesssim B$ and $B\lesssim A$. We also use notation $C_{B}$ to denote a constant depends on $B$. We use usual $L^{p}$ spaces and Sobolev spaces $H^{s}$.
\subsection{Acknowledgment}
We thank Zihua Guo for helpful comments. Fan was partially supported in National Key R\&D Program of China, 2021YFA1000800, and NSFC grant No.11688101. Zhao was partially supported by the NSF grant of China (No. 12101046) and the Beijing Institute
of Technology Research Fund Program for Young Scholars.
\section{Proof of Theorem 1.1}
In this section, we prove Theorem \ref{mainthm}, assuming several decay estimates whose proofs will be included in the next section. 
We introduce some notations as below.
\begin{equation}
    u_{0,\leq N}:=\sum_{0 \leq n \leq N}\frac{1}{n^2}e^{-i10^{n}\Delta}\phi,
\end{equation}
\begin{equation}
    u_{0, N}:=u_{0,\leq N}-u_{0,\leq N-1},
\end{equation}
and
\begin{equation}
    u_{0,> N}:=\sum_{ n > N}\frac{1}{n^2}e^{-i10^{n}\Delta}\phi.
\end{equation}
We then define $u_{\leq N},u_{N},u_{>N}$ to be the associated nonlinear solutions respectively. 

We want to confirm that $u_{N+1}$ dominates the dynamics of $u$ within $t\in [10^{N+1},10^{N+1}+1]$. 

In the rest of this section, all implicit constants may depend on $\phi$, but are otherwise universal, and $\alpha_1,\alpha_2,...$ are fixed positive constants and can be written out explicitly if necessary.

We first present several decay estimates here. 
\begin{lemma}\label{lem: decay}
For all $t\geq 2\times 10^N$, we have 
\begin{equation}\label{decay1}
\|u_{\leq N}(t)\|_{L^{\infty}_x} \lesssim (t-10^N)^{-\frac{3}{2}}. 
\end{equation}
For all $t \leq 5 \times 10^N$,
\begin{equation}\label{deca2}
  \|u_{N+1}(t)\|_{L^{\infty}_x} \lesssim (10^{N+1}-t)^{-\frac{3}{2}}.   
\end{equation}
For all $0\leq t \leq 2 \times 10^{N+1}$
\begin{equation}\label{decay3}
   \|u_{>N+1}(t)\|_{L^{\infty}_x} \lesssim (10^{N+2}-t)^{-\frac{3}{2}}.   
\end{equation}
\end{lemma}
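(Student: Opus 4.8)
The plan is to write each nonlinear solution through the Duhamel formula $u_{\bullet}(t)=e^{it\Delta}u_{0,\bullet}-i\int_0^t e^{i(t-s)\Delta}\bigl(\abs{u_\bullet}^2u_\bullet\bigr)(s)\,ds$ and to bound the linear and nonlinear parts separately, upgrading the linear dispersive decay to the full solution by a bootstrap. For the linear part the only input is the pointwise dispersive estimate $\norm{e^{i\tau\Delta}\phi}_{L^\infty_x}\lesssim\abs{\tau}^{-3/2}\norm{\phi}_{L^1}$, applied to each backward-focused profile, since $e^{it\Delta}\bigl(e^{-i10^n\Delta}\phi\bigr)=e^{i(t-10^n)\Delta}\phi$. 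In each of the three time ranges every relevant profile's contribution is already bounded by the claimed rate, and summation against the convergent weights $\sum_n n^{-2}$ preserves it: for $t\ge 2\times10^N$ one has $\abs{t-10^n}\ge t-10^N$ for all $n\le N$, giving $(t-10^N)^{-3/2}$; for $u_{N+1}$ with $t\le 5\times10^N$ the single profile gives $(10^{N+1}-t)^{-3/2}$; and for $u_{>N+1}$ with $t\le 2\times10^{N+1}$ the nearest focusing time is $10^{N+2}$, giving $(10^{N+2}-t)^{-3/2}$.

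Estimates \eqref{deca2} and \eqref{decay3} are then easy: the data of $u_{N+1}$ and $u_{>N+1}$ have $H^1$ norms $\lesssim N^{-2}$ and $\lesssim N^{-1}$ respectively, so these are small-data solutions, essentially linear, and Proposition~\ref{global} makes their scattering norms uniformly small, so the Duhamel correction cannot disturb the linear bound. Estimate \eqref{decay1} is the crux, and here the $H^4\cap L^1$ decay estimate of \cite{fan2021decay} cannot be used as a black box: $\norm{u_{0,\le N}}_{L^1}$ grows like $10^{3N/2}$, which would cost a factor $10^{3N/2}$ and wreck both the centering at $10^N$ and the uniformity in $N$. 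Instead I would work perturbatively on the post-focusing interval $[2\times10^N,\infty)$. The point is that the scattering norm of $u_{\le N}$ on this interval is small \emph{uniformly in} $N$: already the linear evolution satisfies $\norm{e^{it\Delta}u_{0,\le N}}_{L^\infty_x}\lesssim(t-10^N)^{-3/2}$, whose $L^5_{t,x}([2\times10^N,\infty))$ norm is tiny, and a stability argument transfers this to $u_{\le N}$, placing us in the perturbative regime where the nonlinear decay follows the centered linear decay.

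The remaining nonlinear contribution comes from the Duhamel integral over the focusing region $s\le 2\times10^N$, and controlling it with a constant independent of $N$ is the main obstacle. Here the crucial structural fact is the amplitude $n^{-2}$ of the profiles: near each focusing time $10^n$ one has $\norm{u_{\le N}(s)}_{L^\infty_x}\lesssim n^{-2}$, so the cubic nonlinearity is $\lesssim n^{-4}$ there and its propagated contribution is $\lesssim n^{-4}\abs{t-10^n}^{-3/2}$; summing over $n\le N$ gives $\sum_n n^{-4}(t-10^N)^{-3/2}\lesssim(t-10^N)^{-3/2}$ with a constant uniform in $N$ (this is exactly where the summability of $\sum n^{-4}$ is used, since without the amplitude weights one would pick up an unacceptable factor of $N$). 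The delicate point throughout is that the bare rate $(s-10^N)^{-3/2}$ is not integrable enough against the kernel as $s\to10^N$, so near the focusing time the pointwise gain must be discarded and replaced by $\norm{u_{\le N}}_{L^\infty_x}\lesssim\norm{u_{\le N}}_{H^2_x}\lesssim1$, using Sobolev embedding, the uniform $H^4$ bound of Proposition~\ref{global}, and local-in-time Strichartz estimates to bridge the focusing window. I expect this uniform-in-$N$ passage through $t=10^N$ to be the technical heart of the argument, whereas \eqref{deca2} and \eqref{decay3} require none of it.
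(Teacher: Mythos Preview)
Your overall strategy matches the paper's: Duhamel plus dispersive estimate for the linear piece, small-data treatment for \eqref{deca2} and \eqref{decay3}, and a bootstrap argument for \eqref{decay1} with the uniform-in-$N$ control coming ultimately from stability theory and the summability of the amplitudes $n^{-2}$. You also correctly identify the obstruction to citing \cite{fan2021decay} as a black box (the blowing-up $L^1$ norm of $u_{0,\le N}$), which is exactly why the paper reproves the estimate by hand.

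Where you diverge from the paper is in the handling of the Duhamel integral over the focusing region. The paper does not argue profile by profile. Instead it runs one global bootstrap on $A(\tau)=\sup_{s\le\tau}s^{3/2}\|u_{\le N}(s)\|_{L^\infty_x}$ (after a time shift placing the last focusing time at the origin), splits the Duhamel integral into five windows $F_1,\dots,F_5$, and closes using two inputs you do not invoke: first, the uniform-in-$N$ smallness of $\|u_{\le N}\|_{L^5_{t,x}}$ on the long dispersed intervals, obtained in one stroke from scattering plus stability; second, an interpolation lemma $\|f\|_{L^\infty}\lesssim\|f\|_{L^2}^{2/5}\|\nabla f\|_{L^2}^{6/25}\|f\|_{H^4}^{9/25}$ to control the near-time piece $F_5$. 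This avoids ever asserting pointwise bounds like $\|u_{\le N}(s)\|_{L^\infty}\lesssim n^{-2}$ near $s=10^n$ for $n<N$.

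Your route is viable but has a soft spot: you restrict the bootstrap to $[2\times10^N,\infty)$ and then \emph{assert} $\|u_{\le N}(s)\|_{L^\infty}\lesssim n^{-2}$ near each earlier focusing time $10^n$ as an input. That claim is about the full nonlinear solution at times outside your bootstrap interval, so it is not free; you would need either a nested bootstrap over the whole half-line or a separate stability argument comparing $u_{\le N}$ to a superposition of well-separated nonlinear bubbles. The paper's single global bootstrap with the $L^5$ smallness input sidesteps this. Also a minor correction: the propagated contribution from the window near $10^n$ is governed by $\||u_{\le N}|^2u_{\le N}\|_{L^1_x}\lesssim\|u_{\le N}\|_{L^2}^2\|u_{\le N}\|_{L^\infty}\lesssim n^{-2}$, not $n^{-4}$; the sum still converges, so this does not break your argument.
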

We will prove Lemma \ref{lem: decay} in the next section.
Based on the above decay estimates, doing integration, correspondingly, we can directly obtain 
\begin{lemma}\label{property}
For $t\geq 2\times 10^N$, 
\begin{equation}\label{property1}
\|u_{\leq N}(t)\|_{L^5_{t,x}[t,\infty]}\lesssim (t-10^N)^{-\alpha_1},
\end{equation}
and for all $t\in [10^{N+1}-10,10^{N+1}+10]$, for all $s>0$
\begin{equation}\label{property2}
    \|e^{is\Delta}u_{\leq N}(t)-u_{\leq N}(t+s)\|_{\dot{H}^{\frac{1}{2}}} \lesssim (10^N)^{-\alpha_2}.
\end{equation}
For all $t \leq 5 \times 10^N$,
\begin{equation}\label{property3}
    \|u_{N+1}(t)\|_{L^5_{t,x}} \lesssim (10^{N+1})^{-\alpha_3}.
\end{equation}
For all $0<t \leq 2\times 10^{N+1}$,
\begin{equation}\label{property4}
\|u_{>N+1}\|_{L^{5}_{t,x}[0,2\times 10^{N+1}]} \lesssim (10^N)^{-\alpha_4}   
\end{equation}
and for all $t\in [10^{N+1}-10,10^{N+1}+10]$, for all $s>0$,
\begin{equation}\label{property5}
    \|e^{is\Delta}u_{>N+1}(t)-u_{>N+1}(t+s)\|_{\dot{H}^{\frac{1}{2}}} \lesssim (10^N)^{-\alpha_5}.    
\end{equation}
\end{lemma}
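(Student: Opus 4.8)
The plan is to deduce every estimate in Lemma~\ref{property} from the pointwise-in-time decay of Lemma~\ref{lem: decay} by elementary interpolation and time integration; beyond that I only need mass conservation, the energy bounds of Proposition~\ref{global}, and a fractional product rule. Throughout I write $v$ for whichever of $u_{\leq N}, u_{N+1}, u_{>N+1}$ is under consideration, and I first record that each of the corresponding data is bounded in $H^1$ uniformly in $N$: by unitarity of $e^{-i10^n\Delta}$ on $H^1$ one has $\|u_{0,\leq N}\|_{H^1}\leq\big(\sum_n n^{-2}\big)\|\phi\|_{H^1}\lesssim1$, and likewise for the other pieces. Hence by mass conservation $\sup_t\|v(t)\|_{L^2}\lesssim1$, and by Proposition~\ref{global} together with interpolation also $\sup_t\|v(t)\|_{\dot H^{1/2}}\lesssim1$.

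For the three space-time bounds \eqref{property1}, \eqref{property3}, \eqref{property4} I would interpolate $\|v(t)\|_{L^5_x}\leq\|v(t)\|_{L^\infty_x}^{3/5}\|v(t)\|_{L^2_x}^{2/5}$, so that the uniform mass bound gives $\|v\|_{L^5_{t,x}(I\times\R^3)}^5\lesssim\int_I\|v(t)\|_{L^\infty_x}^3\,dt$ on any time interval $I$. It then remains to insert the relevant decay bound and integrate a negative power. For \eqref{property1}, the estimate \eqref{decay1} yields $\int_t^\infty(\tau-10^N)^{-9/2}\,d\tau\lesssim(t-10^N)^{-7/2}$, hence one may take $\alpha_1=7/10$. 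For \eqref{property3} and \eqref{property4}, on the stated ranges the quantities $10^{N+1}-\tau$ and $10^{N+2}-\tau$ stay comparable to $10^{N+1}$ and $10^{N+2}$ respectively, so \eqref{deca2} and \eqref{decay3} integrate to the same power $-7/2$ of $10^{N}$, giving $\alpha_3=\alpha_4=7/10$.

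For the two $\dot H^{1/2}$ convergence estimates \eqref{property2}, \eqref{property5} I would start from the Duhamel identity $e^{is\Delta}v(t)-v(t+s)=i\int_0^s e^{i(s-\sigma)\Delta}\big(|v|^2v\big)(t+\sigma)\,d\sigma$ and use that $e^{i(s-\sigma)\Delta}$ is unitary on $\dot H^{1/2}$ to reduce to $\int_0^s\||v|^2v(t+\sigma)\|_{\dot H^{1/2}}\,d\sigma$. The fractional chain rule, together with the uniform $\dot H^{1/2}$ bound above, gives $\||v|^2v\|_{\dot H^{1/2}}\lesssim\|v\|_{L^\infty_x}^2\|v\|_{\dot H^{1/2}}\lesssim\|v\|_{L^\infty_x}^2$, so the whole difference is controlled by $\int_t^{t+s}\|v(\tau)\|_{L^\infty_x}^2\,d\tau$. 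For \eqref{property2} with $v=u_{\leq N}$ and $t\sim10^{N+1}$, the decay \eqref{decay1} is valid on the entire half-line $[10^{N+1},\infty)$ (the piece $u_{\leq N}$ has no focusing after $10^N$), so $\int_{10^{N+1}}^\infty(\tau-10^N)^{-3}\,d\tau\lesssim(10^N)^{-2}$ converges for every $s>0$ and gives $\alpha_2=2$.

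The genuinely delicate point is \eqref{property5}. Here $v=u_{>N+1}$ \emph{refocuses}, at time $10^{N+2}$, so \eqref{decay3} is only available for $\tau\leq2\times10^{N+1}$, strictly before that next focusing. On that range $\int_t^{t+s}(10^{N+2}-\tau)^{-3}\,d\tau\lesssim(10^{N+1})^{-2}$, yielding $\alpha_5=2$ for all $s$ with $t+s\lesssim10^{N+1}$ — equivalently one may route the same reduction through the space-time bound \eqref{property4} rather than the pointwise decay. This truncated range is exactly what is used in the proof of Theorem~\ref{mainthm}, where $s$ is effectively capped by the length of the window $[10^{N+1}-10,10^{N+1}+10]$ on which the kick produced by $u_{N+1}$ is isolated; one should neither expect nor need the bound for arbitrarily large $s$, since the refocusing of $u_{>N+1}$ at $10^{N+2}$ contributes a term that is only polynomially small in $N$. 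The only quantitative outputs are the exponents of $10^N$, and they come out positive and explicit from the integrations above.
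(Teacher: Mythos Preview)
Your proof is correct and follows the paper's strategy: derive the $L^5_{t,x}$ bounds \eqref{property1}, \eqref{property3}, \eqref{property4} by interpolating $L^5_x$ between $L^\infty_x$ and $L^2_x$ and then integrating the pointwise decay of Lemma~\ref{lem: decay}, and obtain the $\dot H^{1/2}$ increments \eqref{property2}, \eqref{property5} from the Duhamel representation. The only difference is in the last step: the paper routes the Duhamel term through a dual Strichartz estimate and the already-established $L^5_{t,x}$ bound (so \eqref{property2} is read off from \eqref{property1}), whereas you bound $\||v|^2v\|_{\dot H^{1/2}}\lesssim\|v\|_{L^\infty_x}^2\|v\|_{\dot H^{1/2}}$ by the fractional product rule and integrate $\|v\|_{L^\infty_x}^2$ directly from Lemma~\ref{lem: decay}. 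Both routes are valid and yield the same exponents. Your remark on \eqref{property5} is also correct: since \eqref{decay3} and \eqref{property4} are only stated for $\tau\leq 2\times 10^{N+1}$, either argument only delivers the exponential bound $(10^N)^{-\alpha_5}$ for $t+s$ before the next refocusing near $10^{N+2}$; this is precisely the range used in the proof of Theorem~\ref{mainthm}, where $s\leq 1$.
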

\begin{proof}
Obviously, \eqref{property1}, \eqref{property3} and \eqref{property4} follows from \eqref{decay1}, \eqref{deca2} and \eqref{decay3} respectively by doing integration directly.

We turn on the proof of  \eqref{property2} and \eqref{property5}. They are similar so we focus on \eqref{property2}. By Duhamel formula, we write
\begin{equation}
 u_{\leq N}(t+s)-e^{is\Delta}u_{\leq N}(t)=i\int_t^{t+s} e^{i(t+s-\tau)\Delta} |u_{\leq N}(\tau)|^2u_{\leq N}(\tau) d\tau   
\end{equation}
then it follows from \eqref{property1} by Strichartz estimate.
\end{proof}
 Assuming \eqref{eq2} does not hold, there exists $\epsilon_0>0$ such that
\begin{equation}
  \|u(t)-e^{it\Delta}u^{+}\|_{\dot{H}_{x}^{\frac{1}{2}}} \lesssim t^{-\epsilon_0}.
\end{equation}
We note that 
\begin{equation}
  \|e^{-it\Delta}u(t)-u^{+}\|_{\dot{H}_{x}^{\frac{1}{2}}} \lesssim t^{-\epsilon_0}
\end{equation}
and
\begin{equation}
  \|e^{-i(t-s)\Delta}u(t-s)-u^{+}\|_{\dot{H}_{x}^{\frac{1}{2}}} \lesssim t^{-\epsilon_0}.
\end{equation}
Thus, considering the difference, we have 
\begin{equation}
  \|e^{-i(t-s)\Delta}u(t-s)-e^{-it\Delta}u(t)\|_{\dot{H}_{x}^{\frac{1}{2}}} \lesssim t^{-\epsilon_0},
\end{equation}
which implies \eqref{property2}.

Now we consider the dynamics of $u_{N+1}$ at interval $[10^{N+1},10^{N+1}+1]$. 

We recall the global result (Theorem \ref{global}) for \eqref{maineq}. Noticing the size of initial data, we have
\begin{equation}
    \|u_{N+1}\|_{L^5_{t,x}} \lesssim \frac{1}{N^2}.
\end{equation}
Running the standard persistence of regularity argument (i.e. using the finite scattering norm to control the Strichartz norm, see Section 3 in \cite{colliander2008global} for example), we obtain
\begin{equation}\label{Stri}
    \|u_{N+1}\|_{\dot{S}^{\frac{1}{2}}} \lesssim \frac{1}{N^2}.
\end{equation}
Recall Duhamel formula, for $t\in [0,1]$, we write $u_{N+1}(t)$ as
\begin{equation}
   u_{N+1}(t+10^{N+1})=e^{it\Delta}u_{N+1}(10^{N+1})+i\int_{10^{N+1}}^{t+10^{N+1}} e^{i(t+10^{N+1}-s)\Delta} (u_{N+1}|u_{N+1}|^2)(s) ds.
\end{equation}
Here, by Duhamel formula, we note that we write $u_{N+1}(10^{N+1})$ as
\begin{equation}
u_{N+1}(10^{N+1})=\frac{1}{(N+1)^2}\phi+i\int_0^{10^{N+1}} e^{i(10^{N+1}-s)\Delta} (u_{N+1}|u_{N+1}|^2)(s) ds.    
\end{equation}
For the Duhamel term, we can estimate it using \eqref{Stri}, which shows that this term is a lower order term. Thus we see $u_{N+1}(10^{N+1})$ is at the level of $\frac{1}{N^2}$. 

Moreover, by further Duhamel formula expansion, we have
\begin{equation}
   u_{N+1}(t+10^{N+1})=e^{it\Delta}u_{N+1}(10^{N+1})+i\frac{1}{(N+1)^6} \int_0^t e^{i(t-s)\Delta} (e^{is\Delta}\phi|e^{is\Delta}\phi|^2)(s) ds+\mathcal{O}(\frac{1}{N^8}),
\end{equation}
where the last term can well controlled using the known global bounds (Lemma \ref{property}) and it be regarded as error term which is small when $N$ is taken large. 

We note that there at least exists $0<\tau_0<1$ such that 
\begin{equation}
  \| \int_0^{\tau_0} e^{i(\tau_0-s)\Delta} (e^{is}\phi|e^{is}\phi|^2)(s) ds \|_{\dot{H}^{\frac{1}{2}}} \neq 0.
\end{equation}
We denote this positive quantity to be $c=c(\phi)>0$. When $N$ is taken big enough, we have
\begin{equation}\label{nplus1expan}
  \|u_{N+1}(t+10^{N+1})-e^{it\Delta}u_{N+1}(10^{N+1})\|_{\dot{H}^{\frac{1}{2}}}   \geq \frac{1}{(N+1)^6}c,
\end{equation}
together with \eqref{property2} and \eqref{property5}, we have \eqref{eq2} proved noticing $N \sim log (t)$.

Now we turn to the proof of \eqref{eq1}, which is implied by \eqref{eq2}. In viewing of Strichartz estimate, $\|u(t)-e^{it\Delta}u^{+}\|_{\dot{H}_{x}^{\frac{1}{2}}}$is controlled by $\|u(s,x)\|_{L^{5}_{s,x}(t,+\infty)}$ (timing some positive constant). Thus \eqref{eq2} implies \eqref{eq1}.

The proof of Theorem \ref{mainthm} is now complete.
\section{Proof of several decay estimates}
 Now we discuss the proof of Lemma \ref{lem: decay}. We will use bootstrap argument to prove it. The proof is similar to \cite{fan2021decay} with suitable modifications (see also \cite{grillakis2013pair} and \cite{lin1978decay}). We will prove \eqref{decay1} as an example since the other two estimates are quite similar. We recall the estimate \eqref{decay1} as follows.

\begin{lemma}\label{sec3decay}
Consider $u_{\leq N}$ solves \eqref{maineq} with initial data $u_{0,\leq N}=\sum_{0 \leq n \leq N}\frac{1}{n^2}e^{-i10^{n}\Delta}\phi$. For all $t\geq 2\times 10^N$, we have 
\begin{equation}
\|u_{\leq N}(t)\|_{L^{\infty}_x} \lesssim (t-10^N)^{-\frac{3}{2}}. 
\end{equation}
\end{lemma}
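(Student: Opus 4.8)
The plan is to establish the decay estimate via a bootstrap argument that propagates an $L^\infty$ decay bound forward in time, using the explicit structure of the initial data $u_{0,\leq N}$ as a sum of backward-translated linear profiles. The key observation is that the initial data $u_{0,\leq N}=\sum_{0\leq n\leq N}\frac{1}{n^2}e^{-i10^n\Delta}\phi$ is built so that each profile $e^{-i10^n\Delta}\phi$ is a linear solution that has been evolved backward in time by $10^n$; hence at time $t$ the free evolution $e^{it\Delta}u_{0,\leq N}$ equals $\sum_{0\leq n\leq N}\frac{1}{n^2}e^{i(t-10^n)\Delta}\phi$, and since $\phi$ is Schwartz, each summand enjoys the sharp linear dispersive bound $\|e^{i(t-10^n)\Delta}\phi\|_{L^\infty_x}\lesssim |t-10^n|^{-3/2}$. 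For $t\geq 2\times 10^N$ we have $t-10^n\geq t-10^N\geq \tfrac{1}{2}t>0$ for every $n\leq N$, so summing against the convergent factor $\sum 1/n^2$ yields the linear bound $\|e^{it\Delta}u_{0,\leq N}\|_{L^\infty_x}\lesssim (t-10^N)^{-3/2}$. The bootstrap then upgrades this to the full nonlinear solution.

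Concretely, I would set up the bootstrap on the quantity $M(T):=\sup_{2\times 10^N\leq t\leq T}(t-10^N)^{3/2}\|u_{\leq N}(t)\|_{L^\infty_x}$ and assume the a priori hypothesis $M(T)\leq K$ for a suitably large constant $K$ depending only on $\phi$. Writing the Duhamel formula
\begin{equation}
u_{\leq N}(t)=e^{it\Delta}u_{0,\leq N}-i\int_0^t e^{i(t-\tau)\Delta}\big(|u_{\leq N}|^2u_{\leq N}\big)(\tau)\,d\tau,
\end{equation}
I would estimate the $L^\infty_x$ norm of the solution by combining the linear term, which by the paragraph above is $\lesssim (t-10^N)^{-3/2}$, with the Duhamel term. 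For the latter one applies the pointwise dispersive estimate $\|e^{i(t-\tau)\Delta}f\|_{L^\infty_x}\lesssim |t-\tau|^{-3/2}\|f\|_{L^1_x}$ and controls the cubic nonlinearity by interpolating between the bootstrap $L^\infty$ bound and the uniform global bounds (Proposition \ref{global}, giving $\sup_t\|u_{\leq N}(t)\|_{H^4}\lesssim 1$, which feeds $L^1_x$ and $L^2_x$ control). The scattering/global theory also gives the finite scattering norm $\|u_{\leq N}\|_{L^5_{t,x}}\lesssim 1$, which is what allows the nonlinear interaction integral to close with a small constant once $N$ is large and time is past $2\times 10^N$.

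The main obstacle, as is typical for these weighted dispersive bootstraps, is the splitting of the Duhamel time integral $\int_0^t$ and the bookkeeping of the weight $(t-10^N)^{-3/2}$ across the different regimes. One must partition $[0,t]$ into the early region $\tau\in[0,10^N]$ (or $[0,\tfrac{t}{2}]$), where $|t-\tau|\gtrsim t-10^N$ so the kernel weight $|t-\tau|^{-3/2}$ can be pulled out and the remaining integral is controlled by space-time norms that are globally bounded, and the late region $\tau\in[\tfrac{t}{2},t]$, where the kernel is singular but the solution already enjoys the inductive decay $\|u_{\leq N}(\tau)\|_{L^\infty_x}\lesssim (\tau-10^N)^{-3/2}$, so the cube decays like $(\tau-10^N)^{-9/2}$ and the integral $\int |t-\tau|^{-3/2}(\tau-10^N)^{-9/2}d\tau$ converges and produces a factor strictly better than $(t-10^N)^{-3/2}$. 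Verifying that both contributions are bounded by a constant strictly smaller than the bootstrap constant $K$ times $(t-10^N)^{-3/2}$ is the heart of the argument and closes the continuity/bootstrap. The remaining estimates \eqref{deca2} and \eqref{decay3} follow the same scheme after the obvious time reflection and shift that places the relevant profile's concentration time at the appropriate endpoint, so I would treat only \eqref{decay1} in detail, exactly as the paper indicates.
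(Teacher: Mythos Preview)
Your bootstrap scheme has a genuine gap in the late-time Duhamel piece. You assert that on $[t/2,t]$ the cubic term decays like $(\tau-10^N)^{-9/2}$ and that the integral $\int |t-\tau|^{-3/2}(\tau-10^N)^{-9/2}\,d\tau$ converges. It does not: in dimension three the dispersive kernel $|t-\tau|^{-3/2}$ is not locally integrable at $\tau=t$, and the factor $(\tau-10^N)^{-9/2}$ is essentially the constant $(t-10^N)^{-9/2}$ near that endpoint, so it provides no help with the singularity. This is precisely the obstruction that forces the paper to abandon the pure $L^\infty\!\to\! L^1$ dispersive estimate on the near-$t$ window and instead invoke the interpolation Lemma~\ref{lem: ele2}: for the piece $F_5=\int_{10^N-M}^{t}\cdots$ one separately bounds $\|F_5\|_{L^2_x}$, $\|\nabla F_5\|_{L^2_x}$ (via H\"older, the bootstrap $L^\infty$ bound, and the smallness of $\|u_{\leq N}\|_{L^5_{t,x}}$ on a short window), and $\|F_5\|_{H^4_x}$ (by the algebra property of $H^4$), then interpolates $\|f\|_{L^\infty}\lesssim \|f\|_{L^2}^{2/5}\|\nabla f\|_{L^2}^{6/25}\|f\|_{H^4}^{9/25}$ to produce an $L^\infty$ bound with a small prefactor that closes the loop. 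Without some device of this kind your argument cannot close.

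There are secondary issues as well. Your bootstrap hypothesis $M(T)$ only controls times $\tau\geq 2\times 10^N$, so for $t$ near $2\times 10^N$ the ``late'' region $[t/2,t]$ contains $\tau\in[10^N,2\times 10^N]$ not covered by the hypothesis. And in the early region your appeal to ``space-time norms that are globally bounded'' is too loose: $\||u|^2u\|_{L^1_x}=\|u\|_{L^3_x}^3$ is merely bounded, so $\int_0^{t/2}\|u(\tau)\|_{L^3_x}^3\,d\tau$ grows like $t$, which against $|t-\tau|^{-3/2}\sim t^{-3/2}$ yields only $t^{-1/2}$, not $(t-10^N)^{-3/2}$. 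The paper resolves both points by running the bootstrap weight over the full profile window, feeding in the uniform-in-$N$ smallness $\|u_{\leq N}\|_{L^5_{t,x}([M,10^N])}\lesssim\epsilon$ from scattering and stability, and splitting the Duhamel integral into five pieces ($F_1,\dots,F_5$) tuned to these inputs.
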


\begin{remark}
One may also consider the `single bubble case' as follows, which is simpler and has a similar proof. Consider $u$ solves \eqref{maineq} with initial data $u_a(0)=e^{-ia\Delta}\phi$, 
\begin{equation}
    \|u_a(2a)\|_{L^{\infty}_x}\lesssim_{\phi} a^{-\frac{3}{2}}.
\end{equation}
\end{remark}

According to the scattering result and stability theory, for any $\epsilon>0$, fix $M>0$ such that for any $N$ large,
\begin{equation}\label{decay}
    \|u_{\leq N}(t)\|_{L^{5}_{t,x}[-10^N,-M]} \lesssim \epsilon, \quad \|u_{\leq N}(t)\|_{L^{5}_{t,x}[M,10^N]} \lesssim \epsilon,
\end{equation}
uniform in $N$.

Then we consider
\begin{equation}
A(\tau)=\sup_{-10^N \leq s\leq \tau} s^{3/2}\|u_{\leq N} (s)\|_{L_{x}^{\infty}}.
\end{equation}
Note that $A(\tau)$ is monotone increasing. We want to prove there exists $B>0$ (depends on $\phi$) such that
\begin{equation}
A(\tau) \leq B, \quad \textmd{ for } \forall \tau \geq 10^N.
\end{equation}
Recall we have Theorem \ref{global} (persistence of regularity), thus for any given $l>0$, one can find $C_{l}$ so that,
\begin{equation}
A(\tau)\leq C_{l},  \tau\leq l,
\end{equation} 
and the solution is continuous  in time in $L^{\infty}$ since we are working on high regularity data.

Thus, Lemma \ref{sec3decay} follows from the following bootstrap lemma.
\begin{lemma}\label{lem: bootstrap}
There exists a constant $C_{u_{0}}$, for that if one has $A(\tau)\leq C_{u_{0}}$, then for $\tau \geq 10^N$, one has $A(\tau)\leq \frac{C_{u_{0}}}{2}$.
\end{lemma}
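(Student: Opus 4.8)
The plan is to prove the bootstrap Lemma \ref{lem: bootstrap} by a Duhamel-based decay argument, estimating the $L^\infty_x$ norm of $u_{\leq N}(t)$ for $t \geq 10^N$ under the a priori assumption that $A(\tau) \leq C_{u_0}$. First I would write $u_{\leq N}(t)$ via the Duhamel formula expanded from a well-chosen reference time. Since the initial data $u_{0,\leq N}$ is itself a superposition of backward-evolved bubbles concentrating near time $10^N$, the natural reference point is $t = 10^N$ (or slightly after, say $2\times 10^N$, consistent with the range of the estimate), where the solution is near its ``focusing'' moment. Thus I would write, for $t \geq 2\times 10^N$,
\begin{equation}
u_{\leq N}(t) = e^{i(t-t_0)\Delta} u_{\leq N}(t_0) + i\int_{t_0}^{t} e^{i(t-\tau)\Delta}\bigl(|u_{\leq N}|^2 u_{\leq N}\bigr)(\tau)\, d\tau,
\end{equation}
and apply the dispersive estimate $\|e^{is\Delta} f\|_{L^\infty_x} \lesssim |s|^{-3/2}\|f\|_{L^1_x}$ to each term.

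Next I would control the two pieces separately. For the linear term, the key is that $u_{\leq N}(t_0)$ (at $t_0 \sim 10^N$) has a controlled $L^1_x$ norm with the right constant depending on $\phi$; this uses that $e^{-i10^n\Delta}\phi$ evolved forward to time $\sim 10^n$ reconstitutes a Schwartz-like profile plus small corrections, so the linear term contributes $\lesssim (t-10^N)^{-3/2}\|u_{\leq N}(t_0)\|_{L^1}$. For the nonlinear term, I would split the time integral and insert the a priori bound $\|u_{\leq N}(\tau)\|_{L^\infty_x} \leq A(\tau)\,\tau^{-3/2} \leq C_{u_0}\,\tau^{-3/2}$, together with the uniform scattering-norm smallness \eqref{decay} and conservation of mass to bound $\||u_{\leq N}|^2 u_{\leq N}\|_{L^1_x}$. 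The cubic structure yields a factor like $C_{u_0}^2 \tau^{-3}$ from two copies of the $L^\infty$ decay and one $L^1$ (mass) factor, and after integrating $\int_{t_0}^t (t-\tau)^{-3/2}\tau^{-3}\,d\tau$ one gains a net $(t-10^N)^{-3/2}$ with a prefactor proportional to $C_{u_0}^3$ (or $C_{u_0}^2$ times the small scattering norm $\epsilon$). Choosing $C_{u_0}$ large relative to the linear contribution and then $\epsilon$ small (equivalently $N$ large) forces the total to be at most $\tfrac{1}{2}C_{u_0}(t-10^N)^{-3/2}$, closing the bootstrap; one must be slightly careful that $t^{-3/2}$ and $(t-10^N)^{-3/2}$ are comparable on $t \geq 2\times 10^N$, which they are up to universal constants.

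The main obstacle I anticipate is the singular time integral near $\tau = t_0$, where $\tau^{-3}$ (the a priori decay) is largest precisely where $(t-\tau)^{-3/2}$ also blows up; naively combining them risks a divergent or non-self-improving bound. The standard remedy, which I would follow (cf. \cite{fan2021decay,lin1978decay,grillakis2013pair}), is to split the Duhamel integral at the midpoint $\tau = (t+t_0)/2$: on $[t_0,(t+t_0)/2]$ one uses the smallness/regularity of the solution near the focusing time and the fact that $(t-\tau)^{-3/2}\sim (t-t_0)^{-3/2}$ is non-singular there, while on $[(t+t_0)/2, t]$ one uses that $\tau^{-3}\sim t^{-3}$ is bounded and integrates the dispersive kernel against it. A further delicate point is ensuring every constant genuinely depends only on $\|u_0\|$ (through $\phi$ and the uniform bounds of Proposition \ref{global}) and not on $N$; this is where the uniform-in-$N$ scattering estimate \eqref{decay} and the persistence of regularity bound \eqref{eq: boundglobal} are essential. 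Once the two regimes are estimated and summed, the bootstrap constant $C_{u_0}$ can be fixed and the self-improvement $A(\tau) \leq \tfrac{1}{2}C_{u_0}$ follows for all $\tau \geq 10^N$.
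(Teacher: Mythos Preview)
Your proposal has two genuine gaps that prevent the bootstrap from closing as written.

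\textbf{The $L^1$ bound at the reference time is not available.} You propose to expand Duhamel from $t_0\sim 10^N$ and control the linear piece by $\|u_{\leq N}(t_0)\|_{L^1_x}$. But neither the linear nor the nonlinear Schr\"odinger flow preserves $L^1$, and at a \emph{single} time $t_0$ only the $n=N$ bubble reconstitutes to $\phi$; each earlier bubble $e^{-i10^n\Delta}\phi$ (with $n<N$) has been linearly evolved a time $\sim 10^N$ and is certainly not in $L^1$ with a bound independent of $N$. The paper avoids this by expanding from the actual initial time, where the linear part $e^{it\Delta}u_{0,\leq N}$ can be estimated bubble-by-bubble using $\|\phi\|_{L^1}$ directly.

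\textbf{The near-diagonal piece of the Duhamel integral is not integrable as you describe.} On the interval $[(t+t_0)/2,t]$ you propose to bound $\||u_{\leq N}|^2u_{\leq N}\|_{L^1_x}$ and integrate the dispersive kernel $(t-\tau)^{-3/2}$; but in three dimensions $\int^t(t-\tau)^{-3/2}\,d\tau$ diverges at $\tau=t$, so the midpoint splitting alone cannot close. This is precisely the step where the paper brings in an additional idea: for the piece $F_5$ near $\tau=t$ the paper does \emph{not} use the $L^1\!\to\! L^\infty$ dispersive estimate at all, but instead bounds $F_5$ separately in $L^2_x$, $\dot H^1_x$, and $H^4_x$ (using unitarity and the smallness $\|u_{\leq N}\|_{L^5_{t,x}}\leq \delta$ on that window), and then interpolates to $L^\infty_x$ via Lemma~\ref{lem: ele2}. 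That interpolation produces a factor $\delta^{7/15}$ multiplying $C_{u_0}t^{-3/2}$, which is what makes the contribution $\leq \tfrac{1}{10}C_{u_0}t^{-3/2}$ once $\delta$ is chosen small. Your scheme, by contrast, would at best produce a factor $C_{u_0}^2$ or $C_{u_0}^3$ (not small) times a divergent integral, so it neither converges nor self-improves. The paper also splits the far-from-$t$ region into five pieces $F_1,\dots,F_5$ using the parameter $M$ and the uniform scattering smallness \eqref{decay}, rather than a single midpoint cut; the extra structure is needed because the solution is \emph{not} globally small and the decay only kicks in away from the bubble times.
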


Now we focus on the proof of Lemma \ref{lem: bootstrap}.
By Duhamel's Formula, we can write the nonlinear solution $u_a$ as follows,
\begin{equation}
   u_{\leq N}(t)=e^{i(t+10^N)\Delta}u_{0,\leq N}+i\int_{-10^N}^t e^{i(t-s)\Delta}(|u_{\leq N}|^2u_{\leq N})(s) ds=u_l+u_{nl}.
\end{equation}

Dispersive estimate gives for some constant $C_{0}$
\begin{equation}
\|u_{l}(t)\|_{L_{x}^{\infty}}\leq C_{0}t^{-3/2}\|\phi\|_{L_{x}^{1}}.
\end{equation} 

Now, we need an extra parameter $M$, and we split $u_{nl}$ into 
\begin{equation}
u_{nl}=F_{1}+F_{2}+F_{3}+F_{4}+F_{5},
\end{equation}
where
\begin{equation}
\begin{aligned}
    &F_1(t)=i\int_{-10^N}^{-10^N+M} e^{i(t-s)\Delta}(|u_{\leq N}|^2u_{\leq N})(s) ds,\\
    &F_2(t)=i\int_{-10^N+M}^{-M} e^{i(t-s)\Delta}(|u_{\leq N}|^2u_{\leq N})(s) ds,\\
    &F_3(t)=i\int_{-M}^M e^{i(t-s)\Delta}(|u_{\leq N}|^2u_{\leq N})(s) ds,\\
        &F_4(t)=i\int_{M}^{10^N-M} e^{i(t-s)\Delta}(|u_{\leq N}|^2u_{\leq N})(s) ds,\\
            &F_5(t)=i\int_{10^N-M}^{t} e^{i(t-s)\Delta}(|u_{\leq N}|^2u_{\leq N})(s) ds.\\
    \end{aligned}
\end{equation}
We will estimate them respectively. We note that  $t\geq 10^N$.

For $F_3$ (also $F_1$), we estimate it directly (noticing finiteness of intervals)
\begin{equation}
\begin{aligned}
\|F_{3}(t)\|_{L_{x}^{\infty}}&\leq \int_{-M}^{M}\|e^{i(t-s)\Delta}|u_{\leq N}|^{2}u(s)\|_{L_{x}^{\infty}}\\
&\lesssim Mt^{-3/2}\sup_{s}\|u_{\leq N}(s)\|_{H^{3}}^{3}\\
&\lesssim MM_{1}^{3}t^{-3/2}.
\end{aligned}
\end{equation}

For other terms we may need to use the decay property \eqref{decay}.

For $F_2$,
\begin{equation}
\begin{aligned}
\|F_{2}(t)\|_{L_{x}^{\infty}}&\leq \int_{-10^N+M}^{-M}\|e^{i(t-s)\Delta}|u_{\leq N}|^{2}u_{\leq N}(s)\|_{L_{x}^{\infty}}\\
&\lesssim t^{-3/2}\int_{-10^N+M}^{-M} \|u_{\leq N}(s)\|_{L^{3}}^{3}\\
&\lesssim t^{-3/2}\int_{-10^N+M}^{-M} \|u_{\leq N}(s)\|_{L^{2}}^{2}\|u_{\leq N}(s)\|_{L^{\infty}} \\
&\lesssim t^{-3/2}M_1^{2}\int_{-10^N+M}^{-M} |s|^{-\frac{3}{2}} \\
&\lesssim \eta M_{1}^{2}t^{-3/2},
\end{aligned}
\end{equation}
where $\eta$ is small and it depends on $M$.

For $F_4$,
\begin{equation}
\|e^{i(t-s)\Delta}|u_{\leq N}(s)|^{2}u_{\leq N}(s)\|_{L_{x}^{\infty}}\lesssim (t-s)^{-3/2}\|u_{\leq N}(s)\|_{L^{2}}^{2}\|u_{\leq N}(s)\|_{L_{x}^{\infty}}\lesssim C_{u_{0}}M_{1}^{2}(t-s)^{-3/2}s^{-3/2}.
\end{equation}
And one estimates $F_{4}$ via 
\begin{equation}
\|F_{4}(t)\|_{L^{\infty}_x} \leq CC_{u_{0}}M_{1}^{2}\int_{M}^{10^N-M}(t-s)^{-3/2}s^{-3/2}ds.
\end{equation}
We note that
\begin{equation}
    \aligned
    &\int_{M}^{10^N-M}(t-s)^{-3/2}s^{-3/2}ds\\
    &\leq \int_{M}^{\frac{t}{2}}(t-s)^{-3/2}s^{-3/2}ds\\
    &+ \int_{\frac{t}{2}}^{10^N-M}(t-s)^{-3/2}s^{-3/2}ds \\
    & \leq CM^{-\frac{1}{2}}t^{-3/2},
    \endaligned
\end{equation}
where $C$ is an universal constant (If $t\geq 2\times 10^N$, we do not need to consider the second term.). Now, we can choose $M$, such that
\begin{equation}
CM_{1}^{2}\int_{M}^{t-M}(t-s)^{-3/2}s^{-3/2}ds\leq \frac{1}{10}t^{-3/2},
\end{equation}
and we can estimate $F_{4}$ as 
\begin{equation}
\|F_{2}(t)\|_{L^{\infty}_x} \leq \frac{1}{10}C_{u_{0}}t^{-3/2}.
\end{equation}

At last, we turn to $F_5$. We recall a lemma.
 \begin{lemma}\label{lem: ele2}
 Let $f(x)$ be a $H^{4}$ function in $\mathbb{R}^{3}$, with  
 \begin{equation}
 \|f\|_{L^{2}}\leq a_{1}, \|\nabla f\|_{L_{x}^{2}}\leq a_{2}, \|f\|_{H^{4}}\leq b. 
 \end{equation}
 Then one has 
 \begin{equation}
\|f\|_{L^{\infty}}\leq  a_{1}^{2/5}a_{2}^{6/25}b^{9/25}.
 \end{equation}
 \end{lemma}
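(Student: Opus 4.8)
The plan is to first establish the sharp, scale-invariant Gagliardo--Nirenberg estimate
\begin{equation*}
\|f\|_{L^\infty}\lesssim\|\nabla f\|_{L^2}^{5/6}\|f\|_{\dot H^4}^{1/6},
\end{equation*}
and then deduce the stated (non-sharp) mixed inequality from it. Since the right-hand side of the claimed bound is increasing in each of $a_1,a_2,b$, it suffices to prove it with $a_1,a_2,b$ replaced by the genuine norms $\|f\|_{L^2},\|\nabla f\|_{L^2},\|f\|_{H^4}$; in particular we may then assume $\|f\|_{L^2}\le\|f\|_{H^4}$, a fact that will be needed at the end.

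The core is a Littlewood--Paley argument. Writing $f=\sum_N P_N f$ over dyadic frequencies $N$ and using Bernstein's inequality $\|P_N f\|_{L^\infty}\lesssim N^{3/2}\|P_N f\|_{L^2}$ in $\R^3$, I would split at a threshold $R$. On low frequencies I bound $\|P_N f\|_{L^2}\le N^{-1}\|\nabla P_N f\|_{L^2}$ and apply Cauchy--Schwarz to get $\sum_{N\le R}N^{1/2}\|\nabla P_N f\|_{L^2}\lesssim R^{1/2}\|\nabla f\|_{L^2}$; on high frequencies I bound $\|P_N f\|_{L^2}\le N^{-4}\||\nabla|^4 P_N f\|_{L^2}$ and sum the geometric series to get $\sum_{N>R}N^{-5/2}\||\nabla|^4 P_N f\|_{L^2}\lesssim R^{-5/2}\|f\|_{\dot H^4}$. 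This yields
\begin{equation*}
\|f\|_{L^\infty}\lesssim R^{1/2}\|\nabla f\|_{L^2}+R^{-5/2}\|f\|_{\dot H^4},
\end{equation*}
and optimizing in $R\sim(\|f\|_{\dot H^4}/\|\nabla f\|_{L^2})^{1/3}$ produces the sharp estimate above.

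To pass to the stated form, I would combine the sharp estimate with the standard interpolation inequality $\|\nabla f\|_{L^2}\lesssim\|f\|_{L^2}^{3/4}\|f\|_{\dot H^4}^{1/4}$ (provable by the same Littlewood--Paley bound) and the trivial inequalities $\|f\|_{\dot H^4}\le\|f\|_{H^4}$ and $\|f\|_{L^2}\le\|f\|_{H^4}$. A short computation then gives $\|\nabla f\|_{L^2}^{5/6}\|f\|_{\dot H^4}^{1/6}\lesssim\|f\|_{L^2}^{2/5}\|\nabla f\|_{L^2}^{6/25}\|f\|_{H^4}^{9/25}$, which is exactly the claim with $(a_1,a_2,b)$ the actual norms. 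I note that the presence of all three norms, rather than just the sharp pair $(\|\nabla f\|_{L^2},\|f\|_{\dot H^4})$, is a matter of convenience for the application, where mass, energy, and the $H^4$ bound are all controlled separately.

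The only genuine subtlety is the endpoint failure $\dot H^{3/2}\not\hookrightarrow L^\infty$: one cannot interpolate $L^\infty$ directly between $L^2$ and $\dot H^4$ through the critical exponent $3/2$ without losing a logarithm. Using the $\dot H^1$ norm, which sits strictly below the critical regularity, on the low-frequency block is precisely what avoids this loss, and this is why a positive power of $\|\nabla f\|_{L^2}$ must appear. The remaining steps --- choosing the cutoff $R$ and the elementary rewriting into the three-norm form --- are routine bookkeeping.
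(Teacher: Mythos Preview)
The paper does not actually prove this lemma: it is stated as a recalled fact and then applied directly in the estimate of $F_{5}$, so there is no paper proof to compare against. Your argument is correct. The Littlewood--Paley derivation of the sharp Gagliardo--Nirenberg inequality $\|f\|_{L^\infty}\lesssim\|\nabla f\|_{L^2}^{5/6}\|f\|_{\dot H^4}^{1/6}$ is standard and accurate, and the algebraic passage to the three-norm form --- replacing the excess $a_2^{89/150}$ by $a_1^{3/4}b_0^{1/4}$ to the same power and then trading the residual $a_1^{9/200}$ for $b^{9/200}$ via $\|f\|_{L^2}\le\|f\|_{H^4}$ --- checks out line by line. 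One cosmetic remark: the lemma as printed has ``$\le$'' with no implicit constant, which is almost certainly a typo for ``$\lesssim$''; your proof of course gives the latter, and that is all that is used in the application.
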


 Now the strategy is to estimate $||F_5||_{H^4}$, $||F_5||_{L_x^2}$ and $||\nabla F_5||_{L_x^2}$ respectively. Then we can apply Lemma \ref{lem: ele2}. Note that $H^{4}$ is a Banach algebra under pointwise multiplication, and $e^{i(t-s)\Delta}$ is unitary in $H^{4}$, we directly estimate $\|F_{5}(t)\|_{H^{4}}$ as 
\begin{equation}\label{eq: H4}
\|F_{3}(t)\|_{H^{4}}\leq M_{1}^{3}.
\end{equation}
Then we turn to the estimate for $||F_3||_{L_x^2}$. 
 \begin{equation}
\aligned
\big\|\int_{10^N-M}^t e^{i(t-s)\Delta}|u_{\leq N}|^2u_{\leq N}ds \big\|_{L^{2}_x} &\leq \int_{t-M}^t || |u_{\leq N}|^2u_{\leq N} ||_{L_x^2}ds \\
  &\leq  \int_{10^N-M}^t || u_{\leq N} ||^{\frac{7}{6}}_{L_x^{5}}\cdot || u_{\leq N} ||^{\frac{13}{10}}_{L_x^{\infty}} \cdot || u_{\leq N} ||^{\frac{8}{15}}_{L_x^{2}} ds \\ 
    &\leq CM_1^{\frac{8}{15}}(C_{u_{0}}t^{-\frac{3}{2}})^{\frac{13}{10}} \int_{10^N-M}^t || u_{\leq N} ||^{\frac{7}{6}}_{L_x^{5}} ds \\
     &\leq CM_1^{\frac{8}{15}}(C_{u_{0}}t^{-\frac{3}{2}})^{\frac{13}{10}} \cdot || u_{\leq N} ||^{\frac{7}{6}}_{L^{5}_t[10^N-M,t]L_x^{5}} \cdot M^{\frac{23}{30}} \\
          &\leq  CM_1^{\frac{8}{15}}M^{\frac{23}{30}}\delta^{\frac{7}{6}}(C_{u_{0}}t^{-\frac{3}{2}})^{\frac{13}{10}}. \\
\endaligned
\end{equation}
Also, we can deal with the estimate for $||\nabla F_5||_{L_x^2}$ as follows.
  \begin{equation}
\aligned
\big\|\int_{10^N-M}^t e^{i(t-s)\Delta}\nabla(|u_{\leq N}|^2u_{\leq N})ds \big\|_{L^{2}_x} &\leq \int_{10^N-M}^t || \nabla(|u_{\leq N}|^2u_{\leq N}) ||_{L_x^2}ds \\
  &\leq  \int_{10^N-M}^t || \nabla u_{\leq N} ||_{L_x^{2}}\cdot || u_{\leq N} ||^{2}_{L_x^{\infty}}  ds \\ 
    &\leq MM_1\cdot (C_{u_0}t^{-\frac{3}{2}})^2.
\endaligned
\end{equation}
At last, putting the above estimates together and applying Lemma \ref{lem: ele2}, we have,
\begin{equation}
\aligned
\big\|\int_{10^N-M}^t e^{i(t-s)\Delta}|u_{\leq N}|^2u_{\leq N}ds \big\|_{L^{\infty}_x} &\leq \big( CM_1^{\frac{8}{15}}M^{\frac{23}{30}}\delta^{\frac{7}{6}}(C_{u_{0}}t^{-\frac{3}{2}})^{\frac{13}{10}} \big)^{\frac{2}{5}} \cdot (MM_1\cdot (C_{u_0}t^{-\frac{3}{2}})^2)^{\frac{6}{25}} \cdot (M_{1}^{3}M)^{\frac{9}{25}} \\
&\leq C^{\frac{2}{5}}M_1^{\frac{23}{25}}M^{\frac{68}{75}}\delta^{\frac{7}{15}}(C_{u_0}t^{-\frac{3}{2}}). 
\endaligned
\end{equation}
Thus, by choosing $\delta$ small enough, according to $M, M_{1}$, we can ensure
\begin{equation}
\|F_{5}(t)\|_{L_{x}^{\infty}}\leq \frac{1}{10}C_{u_{0}}t^{-3/2},
\end{equation}
as desired.

To summarize, for all $t\leq \tau$, assuming $A(\tau)\leq C_{u_{0}}$, we derive
\begin{itemize}
\item For $t\leq L$, one has 
\begin{equation}
\|u_{\leq N}(t)\|_{L^{\infty}_x} \leq A(L)t^{-3/2}.
\end{equation}
\item For $L\leq t\leq \tau$, one has
\begin{equation}
 \|u_{\leq N}(t)\|_{L^{\infty}_x} \leq \{C(\|\phi\|_{L_{x}^{1}}+MM_{1}^{3})+\frac{1}{10}C_{u_{0}}+\frac{1}{10}C_{u_{0}}\}t^{-3/2}.
\end{equation}
\end{itemize}

Thus, if one choose 
\begin{equation}
C_{u_{0}}:=10A(L)+C(\|u_{0}\|_{L_{x}^{1}}+2MM_{1}^{3}),
\end{equation}
then the desired estimates follows. This ends the proof of Lemma \ref{lem: bootstrap}, which implies Lemma \ref{sec3decay}. So is Lemma \ref{lem: decay} since the other two estimates in Lemma \ref{lem: decay} can be obtained in a similar way.
\section{Proof of Theorem \ref{thm: onemore}}
Theorem \ref{thm: onemore} follows from concentration compactness argument (see \cite{kenig2006global,kenig2008global,keraani2001defect,murphy2021threshold}) and dispersive estimates.

Prove by contradiction.  We can find $\{u_{0,n}\}$ bounded in $H^{1}\cap L^{1}$, with
\begin{equation}
	\|u_{n}\|_{L_{t,x}^{5}([n,\infty)\times \mathbb{R}^3)}\geq \delta,
\end{equation}
where $u_{n}$ are the associated solutions to \eqref{maineq}.

Thus, up to picking subsequence, via by-now standard concentration compactness, we can find back ward scattering profile in the sense there is $f\in H^{1}$, with parameter $t_{n},x_{n}$, $t_{n}\rightarrow \infty$ and $x_{n}\in \mathbb{R}^{3}$, such that
\begin{equation}
\langle e^{it_{n}\Delta}u_{0,n}(x-x_{n}),f \rangle \rightarrow \|f\|_{L_x^2}^{2}>0.
\end{equation}

To see the contradiction. Since $e^{it_{n}\Delta}u_{0,n}(x-x_{n})$ are bounded in $H^{1}$, we may, up to some standard approximation arguments, assume $f$ is compactly supported. Thus $f\in L^{1}$.  Then, since $u_{0,n}$ are also bounded in $H^{1}\cap L^{1}$, we apply dispersive estimate and gives
\begin{equation}
\|e^{it_{n}\Delta}u_{0,n}(x-x_{n})\|_{L_{x}^{\infty}}\lesssim t_{n}^{-3/2}.
\end{equation}
 
 Let $n\rightarrow \infty$, and we have a contradiction.
\section{Appendix}
In the appendix, we show the scattering rate and convergence rate for NLS, which describes how fast the nonlinear solution of NLS scatters to linear solutions. We still use the 3D, cubic NLS \eqref{maineq} as the model case. Other models can be treated in a similar way. The result reads
\begin{theorem}\label{rate}
For \eqref{maineq} with initial data $u_0(x)$ in $H^4\cap L^1$, there exists constant $C_{u_0}$, for $t>0$
\begin{equation}\label{eq: rate1}
  \|u(t)-e^{it\Delta}u^{+}\|_{\dot{H}_{x}^{\frac{1}{2}}} \lesssim_{C_{u_0}} t^{-2}
\end{equation}
and
\begin{equation}\label{eq: rate2}
  \|u(t)\|_{L_{t,x}^{5}(t\geq s)}  \lesssim_{C_{u_0}} s^{-\frac{7}{10}}.
\end{equation}
\end{theorem}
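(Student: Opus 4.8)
The plan is to reduce both statements to the pointwise dispersive decay estimate \eqref{eq: od}, namely $\|u(t)\|_{L_x^\infty}\lesssim C_{u_0}t^{-3/2}$, which is the genuinely hard input and is supplied by \cite{fan2021decay} (and is proved by the same bootstrap scheme as Lemma \ref{lem: decay}). Granting that estimate together with mass conservation $\|u(t)\|_{L_x^2}\lesssim 1$ and the uniform $H^4$ bound \eqref{eq: boundglobal} (hence a uniform $\dot H^{1/2}$ bound after interpolation), both \eqref{eq: rate2} and \eqref{eq: rate1} become relatively elementary. Since $u_0\in H^4\subset H^1$, Proposition \ref{global} guarantees scattering, so $u^+$ exists and the Duhamel tail representation used below is legitimate.

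For the scattering rate \eqref{eq: rate2}, I would first interpolate the $L_x^5$ norm between $L_x^2$ and $L_x^\infty$: writing $\tfrac15=\tfrac{\theta}{2}$ gives $\theta=\tfrac25$, so
\begin{equation}
\|u(t)\|_{L_x^5}\leq \|u(t)\|_{L_x^2}^{2/5}\|u(t)\|_{L_x^\infty}^{3/5}\lesssim \big(t^{-3/2}\big)^{3/5}= t^{-9/10}.
\end{equation}
Raising to the fifth power and integrating in time then yields
\begin{equation}
\|u\|_{L_{t,x}^5(t\geq s)}^5=\int_s^\infty \|u(t)\|_{L_x^5}^5\,dt\lesssim \int_s^\infty t^{-9/2}\,dt\sim s^{-7/2},
\end{equation}
and taking the fifth root gives $\|u\|_{L_{t,x}^5(t\geq s)}\lesssim s^{-7/10}$, which is exactly \eqref{eq: rate2}.

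For the convergence rate \eqref{eq: rate1}, I would start from the Duhamel tail, using that $e^{-is\Delta}$ is unitary on $\dot H^{1/2}$:
\begin{equation}
u(t)-e^{it\Delta}u^+=-i\int_t^\infty e^{i(t-s)\Delta}\big(|u|^2u\big)(s)\,ds,
\end{equation}
so that, by the triangle inequality and unitarity,
\begin{equation}
\|u(t)-e^{it\Delta}u^+\|_{\dot H_x^{1/2}}\leq \int_t^\infty \big\||u|^2u\big\|_{\dot H_x^{1/2}}(s)\,ds.
\end{equation}
The next step is the fractional Leibniz (Kato--Ponce) estimate $\||\nabla|^{1/2}(|u|^2u)\|_{L_x^2}\lesssim \|u\|_{L_x^\infty}^2\|u\|_{\dot H_x^{1/2}}$, which is justified since $u\in H^4$; combined with $\|u(s)\|_{\dot H_x^{1/2}}\lesssim 1$ and $\|u(s)\|_{L_x^\infty}\lesssim s^{-3/2}$ this gives the integrand bound $s^{-3}$, whence $\int_t^\infty s^{-3}\,ds\sim t^{-2}$, proving \eqref{eq: rate1}.

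The only real obstacle is the decay estimate \eqref{eq: od} itself, which is imported; once it is in hand, the main points requiring care are minor: justifying the fractional Leibniz rule for the non-holomorphic nonlinearity $|u|^2u$ (harmless at $H^4$ regularity) and confirming the uniform-in-time $\dot H^{1/2}$ bound via interpolation between mass conservation and \eqref{eq: boundglobal}. I would note that the crude triangle-inequality step for the Duhamel tail already produces the sharp $t^{-2}$ rate, so no refined inhomogeneous Strichartz estimate is needed here.
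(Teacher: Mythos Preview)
Your argument is correct and follows essentially the same route as the paper: both reduce everything to the dispersive decay $\|u(t)\|_{L_x^\infty}\lesssim t^{-3/2}$ from \cite{fan2021decay}, then get \eqref{eq: rate2} by interpolating to $L_x^5$ and integrating, and get \eqref{eq: rate1} from the Duhamel tail plus fractional Leibniz. The only cosmetic difference is the H\"older splitting in the Leibniz step: the paper places the factors in $L^3_tL^6_x\times L^3_tL^6_x\times L^3_t\dot W^{1/2,6}_x$, whereas you use $L^\infty_x\times L^\infty_x\times \dot H^{1/2}_x$ pointwise in time and then integrate $s^{-3}$ directly---your choice is slightly cleaner since it only needs the uniform $\dot H^{1/2}$ bound and avoids having to argue decay for $\||\nabla|^{1/2}u\|_{L^6_x}$.
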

\begin{remark}
For the general case, one expects: for $d$-dimensional, $H^{s_c}$-critical ($0 \leq s_c \leq 1$, corresponding nonlinear exponent $p$ satisfies $p=\frac{4}{d-2s_c}$) NLS \eqref{maingeneral} with initial data $u_0(x)$  in $H^s\cap L^1$ ($s>1$ is large enough), there exists constant $C_{u_0}$, for $t>0$
\begin{equation}
  \|u(t)-e^{it\Delta}u^{+}\|_{\dot{H}_{x}^{s_c}}  \lesssim_{C_{u_0}} t^{1-\frac{dp}{2}}
\end{equation}
and
\begin{equation}
  \|u(t)\|_{L_{t,x}^{\frac{2(d+2)}{d-2s_c}}(t\geq s)}  \lesssim_{C_{u_0}} s^{\frac{-(1+2s_c)d-2s_c}{2(d+2)}}.
\end{equation}
\end{remark}
\begin{remark}
As discussed in Introduction, based on Theorem \ref{thm: onemore}, one can improves \cite{fan2021decay} in the sense of obtaining nonlinear decay estimates with constants only depending the size of the initial data. Thus the implicit constant $C_{u_0}$ in Theorem \ref{rate} also only depends on the size of the initial data instead of the profile (shape) of the initial data.
\end{remark}
\begin{proof}
We first recall our recent result \cite{fan2021decay}. Let $u$ solves \eqref{maineq} with initial data $u_{0}$, which is in  $H^{4}\cap L^{1}$.
Then there exists a constant $C_{u_{0}}$, depending $u_{0}$, such that for $t \geq 0$,
\begin{equation}
    \|u(t,x)\|_{L_x^{\infty}}\leq C_{u_{0}} t^{-\frac{3}{2}}.
\end{equation}
By interpolation with mass, we have for $p>2$,
\begin{equation}\label{eq: ratedecay}
    \|u(t,x)\|_{L_x^{p}}\leq C_{u_{0}} t^{-3(\frac{1}{2}-\frac{1}{p})}.
\end{equation}
Then one can prove Theorem \ref{rate} by straight calculations together with Strichartz estimate and the H\"older. First, the scattering rate \eqref{eq: rate2} can be obtained directly using \eqref{eq: ratedecay}. For \eqref{eq: rate1}, recall 
\begin{equation}
    u_{+}=u_0-i\int_0^{\infty} e^{-is\Delta}|u|^{2}u(s) ds.
\end{equation}
and 
\begin{equation}
e^{-it\Delta}u(t)=u_{0}-\int_{0}^{t}e^{-is\Delta}|u|^{2}u(s) ds.
\end{equation}
Then via Strichartz estimate (using $L^1_tL^2_x$ to be the dual Strichartz norm),
\begin{align}
  \|u(t)-e^{it\Delta}u^{+}\|_{\dot{H}_{x}^{\frac{1}{2}}} &\lesssim_{C_{u_0}} \big\|\int_{0}^{t}e^{-is\Delta}(|u|^{2}u)(s) ds \big\|_{\dot{H}_{x}^{\frac{1}{2}}} \\
  &\lesssim_{C_{u_0}} \| |\nabla|^{\frac{1}{2}} |u|^{2}u(s) \|_{L^1_tL^2_x} \\
   &\lesssim_{C_{u_0}} \|u\|^2_{L^3_tL^6_x}\||\nabla|^{\frac{1}{2}}u\|_{L^3_tL^6_x} \\
    &\lesssim_{C_{u_0}} t^{-2}.
\end{align}
\end{proof}

\bibliographystyle{amsplain}
\bibliographystyle{plain}
\bibliography{BG}

\end{document}